\newtheorem{theorem}{Theorem}[section]
\newtheorem{corollary}[theorem]{Corollary}
\newtheorem{proposition}[theorem]{Proposition}
\begin{document}

\title[Separately continuous functions of many variables on products]{Separately continuous functions of many variables on product of spaces which are products of metrizable multipliers}

\author{V.K.Maslyuchenko}
\address{Department of Mathematics\\
Chernivtsi National University\\ str. Kotsjubyn'skogo 2,
Chernivtsi, 58012 Ukraine}
\email{mathan@chnu.cv.ua}

\author{V.V.Mykhaylyuk}
\address{Department of Mathematics\\
Chernivtsi National University\\ str. Kotsjubyn'skogo 2,
Chernivtsi, 58012 Ukraine}
\email{vmykhaylyuk@ukr.net}

\subjclass[2000]{Primary 54B10}


\commby{Ronald A. Fintushel}


\keywords{separately continuous functions, discontinuity points set, dependence functions on $\aleph$ coordinates}

\begin{abstract}
It is obtained necessary and sufficient conditions of dependence
on $\aleph$ coordinates for functions of several variables,
each of which is a product of metrizable factors. The set of discontinuity
points of such functions is characterised in the case, when each
variable is a product of separable metrizable spaces.
\end{abstract}

\maketitle
\section{Introduction}

Dependence on $\aleph$ coordinates of separately continuous functions defined on the product of two spaces which are products of compacts was investigated in paper [1] using a theorem on density of topological product and a theorem of dependence on at most countable coordinates of continuous function defined on the product of compacts.
In particular, it was obtained that if $X$ and $Y$ are the products of metrizable compacts then every separately continuous function $f:X\times Y \to \bf R$ depends on at most countable coordinates. This gives the possibility to characterize the discontinuity points set of such functions. It was establishes in [2] that the dependence on some coordinates of separately continuous functions of two variables is closely connected with product properties which defined using families of open sets. A similar connection for continuous mappings was obtained in [3]. In this paper we consider separately continuous functions of many variables. Developing results from [2], we obtain necessary and sufficient conditions for the dependence on certain number of coordinates which coincides in the case when every variable is the product of metrizable spaces.  Using this result we characterize the discontinuity points set for separately continuous functions many variables in the case when every variable is the product of separable metrizable spaces.

\section{Definitions and auxiliary statements}

Let $X=\prod\limits _{s\in S}X_s$ be the product of a family of sets $X_s$, $Z$ be a set and $T\subseteq S$. We say that {\it a mapping $f:X\to Z$ concentrated on $T$}, if $f(x')=f(x'')$ for $x',x''\in X$ with $x'_{|_T}=x''_{|_T}$. Moreover, if $|T|\le\aleph$, then we say that  {\it $f$ depends at most on $\aleph$ coordinates}. Let $Y$ be a set. We say that {\it a mapping $g:X\times Y \to Z$ concentrated on $T$ with respect to the first variable}, if the mapping $\varphi :X\to Z^Y$, $\varphi (x)(y)=g(x,y)$, concentrated on $T$. Moreover, if $|T|\le\aleph$, then we say that {\it $g$ depends at most on $\aleph$ coordinates with respect to the first variable}.

Let $P=X_1\times\cdots\times X_n$, $X_i=\prod\limits _{s\in S_i} X_{i,s}$, $f:P\to Z$ be a mapping. Fix an index $i=1,...,n$ and denote by $\tilde f$ the mapping $\tilde f:X_i\times\hat X_i\to Z$, $\tilde f(x_i,\hat x_i)=f(x_1,...,x_n)$, where $\hat X_i=X_1\times \cdots \times X_{i-1}
\times X_{i+1}\times \cdots X_n$ and $\hat x_i=(x_1,\cdots ,x_{i-1},x_{i+1}, \cdots ,x_n)$. We say that {\it $f$ concentrated on $T\subseteq S_i$ with respect to $i$-th variable}, if $\tilde f$ concentrated on $T$ with respect to the first variable. Moreover, if $|T|\le\aleph$, then we say that {\it $f$ depends on at most $\aleph$ coordinates with respect to $i$-th variable}. For an abridgment we shall use the term "depends on $\aleph$ coordinates" instead the term "depends on at most $\aleph$ coordinates". If we denote by $S$ the direct sum of sets $S_1,...,S_n$ and put $Y_s=X_{i,s}$ for $s\in S_i$ and $Y=\prod\limits _{s\in S} Y_s$, then $P$ can be identified with $Y$ and $f$ is a mapping defined on $Y$ with values in $Z$. Note that if a cardinal $\aleph$ is infinite then $f:Y\to Z$ depends on $\aleph$ coordinates if and only if $f$ depends on $\aleph$ coordinates with respect to $i$-th variable for every $i=1,...,n$.

Let $X$ be a topological space and $\aleph$ be an infinite cardinal. We say that a family $\alpha=(A_i:i\in I)$ of sets $A_i\subseteq X$ is {\it locally finite}, if for every $x\in X$ there exists a neighborhood $U$ of $x$ such that the set $\{i\in I:A_i\cap U\ne \O \}$ is finite, {\it poinwise finite} ($\aleph$-{\it pointwise}), if for every $x\in X$ the set $\{i\in I: x\in A_i\}$ is finite (has the cardinality $\leq\aleph$). Moreover, for arbitrary family $\alpha=(A_i:i\in I)$ the cardinality of $I$ we shall call by the cardinality of the family $\alpha$.

The following properties of a topological space $X$ will be useful for our investigation:

(I$_\aleph$) every locally finite family of open nonempty subsets of $X$ has the cardinality $\leq\aleph$;

(II$_\aleph$) every poinwise finite family of open nonempty subsets of $X$ has the cardinality $\leq\aleph$;

(III$_\aleph$) every $\aleph$-pointwise family of open nonempty subsets of $X$ has the cardinality $\leq\aleph$.

Clearly that (III$_\aleph$)$\Longrightarrow $(II$_\aleph$)$\Longrightarrow $(I$_\aleph$).

We shall use the following result [3, Proposition 1]: a topological product
$X=\prod\limits _{s\in S} X_s$ has property (I$_\aleph$), (II$_\aleph$) or (III$_\aleph$) if and only if the same property has the product
$X(T)=\prod\limits _{s\in T} X_s$ for every finite set $T\subseteq S$.

This result on the property (III$_\aleph$) can be strengthened in such a way.

\begin{proposition}\label{p:2.1} The product $X\times Y $ of topological spaces $X$ and $Y$ has the property (III$_\aleph$) if and only if the spaces $X$ and $Y$ have (III$_\aleph$).
\end{proposition}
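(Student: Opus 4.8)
The statement splits into two implications, of which one is essentially formal and the other carries the real content. It is worth noting first that the quoted result [3, Proposition 1] does not by itself settle the matter: it reduces property (III$_\aleph$) of $X\times Y$ to the same property for all of its finite subproducts, but $X\times Y$ is itself one of those subproducts, so the reduction is circular for the sufficiency direction. The plan is therefore to dispatch the easy direction quickly and to argue the substantial one by hand.

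For the necessity, suppose $X\times Y$ has (III$_\aleph$) and let $(A_i:i\in I)$ be an $\aleph$-pointwise family of nonempty open subsets of $X$. I would lift it to the family $(A_i\times Y:i\in I)$ of nonempty open subsets of $X\times Y$. Since $(x,y)\in A_i\times Y$ exactly when $x\in A_i$, the fibre $\{i:(x,y)\in A_i\times Y\}$ equals $\{i:x\in A_i\}$ and so has cardinality $\le\aleph$; hence the lifted family is $\aleph$-pointwise. Then (III$_\aleph$) for $X\times Y$ gives $|I|\le\aleph$, so $X$ has (III$_\aleph$), and the argument for $Y$ is symmetric.

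For the sufficiency, assume both $X$ and $Y$ have (III$_\aleph$) and let $(W_i:i\in I)$ be an $\aleph$-pointwise family of nonempty open subsets of $X\times Y$. First I would reduce to rectangles: for each $i$ choose a basic open box $U_i\times V_i\subseteq W_i$ with $U_i,V_i$ nonempty open. As $U_i\times V_i\subseteq W_i$, the new family is again $\aleph$-pointwise and shares the index set $I$, so it suffices to bound its cardinality. The decisive step is to fix a point $x\in X$, put $I_x=\{i\in I:x\in U_i\}$, and examine the trace family $(V_i:i\in I_x)$ in $Y$. For any $y\in Y$ one has $i\in I_x$ and $y\in V_i$ precisely when $(x,y)\in U_i\times V_i$, so $\{i\in I_x:y\in V_i\}$ has cardinality $\le\aleph$; thus $(V_i:i\in I_x)$ is an $\aleph$-pointwise family of nonempty open subsets of $Y$, and (III$_\aleph$) for $Y$ yields $|I_x|\le\aleph$. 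Since $x$ was arbitrary, this says exactly that $(U_i:i\in I)$ is $\aleph$-pointwise in $X$, whence (III$_\aleph$) for $X$ gives $|I|\le\aleph$, as required.

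The only genuine obstacle is the sufficiency, and within it the key observation is that slicing the box family at a fixed $x$ converts the $\aleph$-pointwise hypothesis on $X\times Y$ into an $\aleph$-pointwise family in $Y$; feeding this through (III$_\aleph$) for $Y$ is what upgrades the collection $(U_i)$ to an $\aleph$-pointwise family in $X$, after which (III$_\aleph$) for $X$ finishes the proof. The passage to rectangles is routine but necessary, since without it the fibres over $(x,y)$ cannot be read off coordinatewise.
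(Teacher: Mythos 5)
Your proof is correct, and in fact it supplies something the paper omits: Proposition~\ref{p:2.1} is stated without any proof, presented only as a strengthening of the cited result [3, Proposition 1], so there is no argument of the authors to compare yours against. Your preliminary remark is exactly on point --- for a two-factor product the finite subproducts in [3, Proposition 1] include $X\times Y$ itself, so that reduction cannot deliver the sufficiency direction, and a direct argument is genuinely required. Both halves of your argument check out. The necessity via the lifted family $(A_i\times Y:i\in I)$ is immediate (modulo the harmless standing assumption that the spaces are nonempty). For the sufficiency, the passage to open boxes $U_i\times V_i\subseteq W_i$ preserves $\aleph$-pointwiseness because the fibres only shrink, and the slicing step is the real engine: fixing $x\in X$, the identity $\{\,i\in I_x : y\in V_i\,\}=\{\,i\in I : (x,y)\in U_i\times V_i\,\}$ shows the slice family $(V_i:i\in I_x)$ is $\aleph$-pointwise in $Y$, so (III$_\aleph$) for $Y$ gives $|I_x|\le\aleph$; as $x$ was arbitrary this upgrades $(U_i:i\in I)$ to an $\aleph$-pointwise family in $X$, and (III$_\aleph$) for $X$ bounds $|I|$. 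It is worth noting that this two-stage bootstrapping is exactly what fails for properties (I$_\aleph$) and (II$_\aleph$): local finiteness or pointwise finiteness of the slice families would not feed back into the corresponding hypothesis on $X$, which explains why the paper claims this productive behaviour only for (III$_\aleph$), and why your proof legitimately needs the full strength of that property for both factors.
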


\begin{corollary}\label{cor:2.2}
The topological product of a family of topological spaces has the property (III$_\aleph$) if and only if every multiplier has this property.
\end{corollary}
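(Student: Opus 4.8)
The final statement will follow quickly once Proposition~\ref{p:2.1} is in hand, so the plan is to record that reduction first and then spend the effort on the two-factor case, which carries all the content. For the Corollary, necessity is immediate: if $\prod_{s\in S}X_s$ has (III$_\aleph$) then, by the cited result [3, Proposition~1], so does every finite subproduct $X(T)$, and taking $T=\{s\}$ gives that each factor $X_s\cong X(\{s\})$ has (III$_\aleph$). For sufficiency, assume every factor has (III$_\aleph$); by the same cited result it suffices to verify (III$_\aleph$) for each finite subproduct $X(T)=X_{s_1}\times\cdots\times X_{s_k}$, and this follows by induction on $k$ from Proposition~\ref{p:2.1}, writing $X(T)=(X_{s_1}\times\cdots\times X_{s_{k-1}})\times X_{s_k}$ and applying the two-factor statement to the bracketed product and $X_{s_k}$.

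It remains to see how Proposition~\ref{p:2.1} itself is proved, since it is the real engine. Its ``only if'' direction is a routine embedding: assuming $X\times Y$ has (III$_\aleph$) and that both factors are nonempty, an $\aleph$-pointwise family $(A_i:i\in I)$ of nonempty open subsets of $X$ lifts to $(A_i\times Y:i\in I)$ in $X\times Y$; each $A_i\times Y$ is nonempty open because $Y\ne\emptyset$, and $(x,y)\in A_i\times Y$ iff $x\in A_i$, so the lifted family is again $\aleph$-pointwise. Property (III$_\aleph$) of $X\times Y$ then forces $|I|\le\aleph$, and symmetrically for $Y$.

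The substantive direction is ``if''. Assuming both $X$ and $Y$ have (III$_\aleph$), take an $\aleph$-pointwise family $(W_i:i\in I)$ of nonempty open subsets of $X\times Y$; I would first shrink each $W_i$ to a basic rectangle $U_i\times V_i\subseteq W_i$ with $U_i,V_i$ nonempty open, which keeps the family $\aleph$-pointwise and leaves the index set unchanged. Then I would argue by a dichotomy on the horizontal fibers. If some $x_0\in X$ belongs to more than $\aleph$ of the $U_i$, put $J=\{i:x_0\in U_i\}$; for every $y\in Y$ the set $\{i\in J:y\in V_i\}$ is contained in $\{i\in I:(x_0,y)\in U_i\times V_i\}$, which has cardinality $\le\aleph$, so $(V_i:i\in J)$ is $\aleph$-pointwise in $Y$ and (III$_\aleph$) of $Y$ forces $|J|\le\aleph$ --- contradicting the choice of $x_0$. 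Hence no such $x_0$ exists, every $x\in X$ lies in at most $\aleph$ of the $U_i$, so $(U_i:i\in I)$ is $\aleph$-pointwise in $X$, and (III$_\aleph$) of $X$ gives $|I|\le\aleph$, as required.

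I expect the fiber dichotomy to be the one genuinely delicate point. The naive attempt --- projecting to $X$ and applying (III$_\aleph$) there --- fails, because many rectangles can share a single wide strip $U_i$, so $(U_i)$ need not be $\aleph$-pointwise on its own. The dichotomy is exactly what localizes any such failure to one fiber $\{x_0\}\times Y$ and converts it into a forbidden $\aleph$-pointwise family in $Y$; verifying that this fiber family really is $\aleph$-pointwise, and keeping track that it is the index set $I$ that must be bounded rather than the number of distinct rectangles, are the points to handle with care.
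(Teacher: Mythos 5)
Your proof is correct and takes the same route the paper intends: Corollary~\ref{cor:2.2} is stated there without proof precisely because it follows immediately from the cited finite-subproduct criterion [3, Proposition~1] together with Proposition~\ref{p:2.1} and induction on the number of factors, which is exactly your reduction. Your fiber-dichotomy argument for Proposition~\ref{p:2.1} itself (which the paper also leaves unproved) is sound as well: shrinking to rectangles preserves the $\aleph$-pointwise property, and the dichotomy correctly converts any failure of $(U_i:i\in I)$ to be $\aleph$-pointwise in $X$ into an $\aleph$-pointwise family of cardinality $>\aleph$ in $Y$, contradicting (III$_\aleph$) for $Y$.
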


By $d(X)$ we denote {\it the density} of the topological space $X$.

\begin{proposition}\label{p:2.3}
Let $X$ be a topological space and $\aleph$ be an infinite cardinal. Then:

(i) if $d(X)\le\aleph$, then $X$ has (III$_\aleph$);

(ii) if $X$  is metrizable, then all properties (I$_\aleph$), (II$_\aleph$),
(III$_\aleph$) are equivalent to $d(X)\le\aleph$.
\end{proposition}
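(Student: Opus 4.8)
The plan is to handle (i) by a direct density argument and then to derive (ii) by closing a cycle of implications, where the only genuinely new link is that metrizability lets one recover a large locally finite family from large density.

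For (i), I would fix a dense set $D\subseteq X$ with $|D|\le\aleph$ and an arbitrary $\aleph$-pointwise family $\alpha=(A_i:i\in I)$ of nonempty open subsets of $X$. Since $D$ is dense and each $A_i$ is open and nonempty, I can select a point $d_i\in A_i\cap D$ for every $i\in I$, which yields a map $i\mapsto d_i$ from $I$ into $D$. For a fixed $d\in D$ the fibre $\{i\in I:d_i=d\}$ is contained in $\{i\in I:d\in A_i\}$, and the latter has cardinality $\le\aleph$ by the $\aleph$-pointwise assumption. Hence $I$ is a union of at most $|D|\le\aleph$ fibres, each of cardinality $\le\aleph$, so $|I|\le\aleph\cdot\aleph=\aleph$ because $\aleph$ is infinite. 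This gives (III$_\aleph$).

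For (ii), the implications (III$_\aleph$)$\Rightarrow$(II$_\aleph$)$\Rightarrow$(I$_\aleph$) are already recorded in the excerpt, and part (i) supplies $d(X)\le\aleph\Rightarrow$(III$_\aleph$). Thus it remains only to prove, for metrizable $X$, the single implication (I$_\aleph$)$\Rightarrow d(X)\le\aleph$, after which the four conditions form a closed cycle and are therefore equivalent.

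I would prove this last implication by contraposition, and here lies the hard part: out of the hypothesis $d(X)>\aleph$ one must manufacture a locally finite family of nonempty open sets of cardinality $>\aleph$. Fix a metric $\rho$ inducing the topology. If for every $n$ each $1/n$-separated subset of $X$ had cardinality $\le\aleph$, then a maximal such set $D_n$ would be $1/n$-dense and $\bigcup_n D_n$ would be dense of cardinality $\le\aleph$, contradicting $d(X)>\aleph$; hence there exist $\delta>0$ and a $\delta$-separated set $\{a_j:j\in J\}$ with $|J|>\aleph$. I would then put $U_j=B(a_j,\delta/4)$, which are nonempty and open, and argue that the family $(U_j:j\in J)$ is discrete: for any $x\in X$ the ball $B(x,\delta/4)$ can meet at most one $U_j$, since meeting two of them would, via four applications of the triangle inequality, force two distinct centres to lie within distance $<\delta$, contradicting $\delta$-separation. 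A discrete family is locally finite, so $(U_j)$ violates (I$_\aleph$), which completes the contrapositive. The delicate point is the choice of the radius $\delta/4$: it must be small enough that the triangle-inequality estimate across a witnessing neighbourhood stays strictly below $\delta$, and this is precisely what upgrades the family from merely disjoint to discrete, hence locally finite, so that (I$_\aleph$) can be invoked.
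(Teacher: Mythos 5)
Your proof is correct. Part (i) is essentially the paper's own argument: your fibre decomposition of $I$ over the chosen points of the dense set is just a rephrasing of the paper's $I=\bigcup_{x\in A}I(x)$ with $|I(x)|\le\aleph$, hence $|I|\le\aleph\cdot\aleph=\aleph$.

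For part (ii) you take a genuinely different route. The paper disposes of the key implication (I$_\aleph$)$\Longrightarrow d(X)\le\aleph$ in two lines by citing the Bing--Nagata--Smirnov theorem: a metrizable space has a $\sigma$-locally finite base $\mathcal B=\bigcup_n\mathcal B_n$; property (I$_\aleph$) bounds each $|\mathcal B_n|$ by $\aleph$, so $|\mathcal B|\le\aleph_0\cdot\aleph=\aleph$, and a base of cardinality $\le\aleph$ gives $d(X)\le\aleph$. You instead argue by contraposition with bare-hands metric geometry: from $d(X)>\aleph$, maximal $1/n$-separated sets (which exist by Zorn's lemma and are $1/n$-dense by maximality) yield a $\delta$-separated set of cardinality $>\aleph$, and the balls of radius $\delta/4$ about its points form a discrete, hence locally finite, family of nonempty open sets of cardinality $>\aleph$, contradicting (I$_\aleph$). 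Your chain of four distances each $<\delta/4$ is the right estimate, and you correctly identify that discreteness (not mere pairwise disjointness) is what is needed, since disjointness alone would only refute (II$_\aleph$), which is too weak to close the cycle at (I$_\aleph$). The trade-off: the paper's argument is shorter but leans on a nontrivial metrization-theory fact [4, p.~416]; yours is self-contained, uses nothing beyond the metric and Zorn's lemma, and in passing establishes the slightly stronger fact that a metrizable space of density $>\aleph$ contains a discrete family of more than $\aleph$ open balls. Both close the same cycle $d(X)\le\aleph\Rightarrow$(III$_\aleph$)$\Rightarrow$(II$_\aleph$)$\Rightarrow$(I$_\aleph$)$\Rightarrow d(X)\le\aleph$.
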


\begin{proof} $(i)$. Let $\alpha=(U_i:i\in I)$ be an $\aleph$-pointwise family of nonempty open in $X$ sets and $A$ is dense in $X$ set with $|A|\le\aleph$. For $x\in A$ we consider the set $I(x)=\{i\in I:x\in U_i\}$ for which $|I(x)|\le\aleph$. Since $U_i$ are open and nonempty and $\overline{A}=X$, $I=\bigcup \limits _{x\in A}I(x)$. Therefore $|I|\le\aleph^2=\aleph$.

$(ii)$. Let $X$ be a metrizable space with (I$_\aleph$). It is wellknown [4,p.416] that there exists in $X$ an $\sigma $-locally finite base ${\mathcal B}$ for which $|{\mathcal B}|\le\aleph$. Then $d(X)\le\aleph$.
\end{proof}

Note that there exists a topological space $X$ with $d(X)>\aleph$ and which has (III$_\aleph$). For example, $X=[0,1]^S$ where $|S|> 2^{\aleph}$.

\section{Necessary conditions of dependence}

We shall use the following result [2, Corollary 1].

\begin{proposition}\label{p:3.1}
Let $X=\prod\limits _{s\in S} X_s$ be the topological product of a family  of topological spaces $X_s$, $Y$ be a set, $Z$ be a Hausdorff space and $f:X\times Y\to Z $ be a function which is continuous with respect to the first variable. Then the set
$$
S_0=\{s\in S:(\exists y\in Y)(\exists u,v\in X)(u_{|_{S\setminus
\{s\}}}=v_{|_{S\setminus \{s\}}}\,\,\,\mbox{and}\,\,\, f(u,y)\ne
f(v,y))\}
$$
is a smallest set among all sets on which $f$ concentrated with respect to the first variable.
\end{proposition}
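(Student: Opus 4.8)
The plan is to verify the two assertions packed into the phrase \emph{smallest set}: first, that $f$ is concentrated on $S_0$ with respect to the first variable, and second, that every set $T$ on which $f$ is concentrated with respect to the first variable contains $S_0$. Unwinding the definition, ``$f$ concentrated on $T$ with respect to the first variable'' means, via the map $\varphi(x)(y)=f(x,y)$, simply that $f(x',y)=f(x'',y)$ for all $y\in Y$ whenever $x'_{|_T}=x''_{|_T}$.

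The minimality half is immediate. Suppose $f$ is concentrated on $T$ and take $s\in S_0$; by definition of $S_0$ there are $y\in Y$ and $u,v\in X$ with $u_{|_{S\setminus\{s\}}}=v_{|_{S\setminus\{s\}}}$ and $f(u,y)\ne f(v,y)$. If $s\notin T$ then $T\subseteq S\setminus\{s\}$, so $u_{|_T}=v_{|_T}$, and concentration on $T$ would force $f(u,y)=f(v,y)$, a contradiction. Hence $s\in T$, and therefore $S_0\subseteq T$.

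The substantive half is showing that $f$ is itself concentrated on $S_0$. Fix $x',x''\in X$ with $x'_{|_{S_0}}=x''_{|_{S_0}}$, fix $y\in Y$, and write $g=f(\cdot,y)$, which is continuous on $X$ by hypothesis. The guiding observation is the contrapositive of the definition of $S_0$: for every $s\notin S_0$, changing only the $s$-th coordinate of any point of $X$ leaves the value of $g$ unchanged. From this a one-coordinate-at-a-time induction handles all finite modifications outside $S_0$. Concretely, for each finite $F\subseteq S\setminus S_0$ I would introduce the point $w_F$ that agrees with $x''$ on the coordinates in $F$ and with $x'$ on all remaining coordinates (recall $x'$ and $x''$ already coincide on $S_0$), so that $w_\emptyset=x'$, and passing from $w_F$ to $w_{F\cup\{s\}}$ alters only the single coordinate $s\notin S_0$; the induction then gives $g(w_F)=g(x')$ for every finite $F$.

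The remaining, and crux, step is to pass from finitely to infinitely many differing coordinates, and this is exactly where continuity with respect to the first variable and the Hausdorff hypothesis on $Z$ enter. The net $(w_F)$, indexed by the finite subsets of $S\setminus S_0$ directed by inclusion, converges to $x''$ in the product topology, since any basic neighbourhood of $x''$ constrains only finitely many coordinates and $w_F$ matches $x''$ on them once $F$ contains the finitely many that lie in $S\setminus S_0$. Continuity of $g$ then yields $g(w_F)\to g(x'')$; but the net $g(w_F)$ is constantly equal to $g(x')$, so by uniqueness of limits in the Hausdorff space $Z$ we get $g(x')=g(x'')$, that is $f(x',y)=f(x'',y)$. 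Since $y$ was arbitrary, $f$ is concentrated on $S_0$, completing the proof. I expect this limiting argument to be the main obstacle: the single-coordinate and finite cases are purely formal consequences of the definition of $S_0$, whereas reaching $x''$ from $x'$ when they differ in infinitely many coordinates genuinely requires the product topology, and it is precisely here that the Hausdorffness of $Z$ is indispensable for identifying the two net limits.
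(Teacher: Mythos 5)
Your proof is correct. Note, however, that the paper itself gives no proof of Proposition \ref{p:3.1}: it is quoted verbatim from reference [2, Corollary 1], so there is no in-paper argument to compare against. Your two-part structure is the natural one and each step is sound: the minimality half ($S_0\subseteq T$ for any admissible $T$) is the purely formal contrapositive argument; and for concentration on $S_0$ you correctly reduce to single-coordinate invariance (the contrapositive of the definition of $S_0$), extend to finite modifications by induction, and then pass to the limit via the net $(w_F)$ indexed by finite subsets of $S\setminus S_0$, which converges to $x''$ precisely because basic product neighbourhoods constrain only finitely many coordinates. The final appeal to Hausdorffness of $Z$ to identify the limit $g(x'')$ of the constant net $g(w_F)=g(x')$ is exactly where that hypothesis is needed, and you flag it correctly. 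This is the standard Noble--Ulmer-style factorization argument, and it is in all likelihood the same mechanism as in the cited source; your write-up can stand as a self-contained proof of the proposition.
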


Further we assume that every topological space $X_s$ and $Y_t$ contains at least two points.

The following result generalizes the Theorem 2 from [2] and gives necessary conditions for the dependence on $\aleph$ coordinates.

\begin{theorem}\label{th:3.2} Let $\aleph$ be an infinite cardinal, $|S|>\aleph$,
$X=\prod\limits _{s\in S}X_s$ be the topological product of a family of completely regular spaces $X_s$, $Y_1,...,Y_n$ be completely regular spaces and every separately continuous function $f:X\times Y_1\times...\times Y_n\to {\bf R} $ depends on $\aleph$ coordinates with respect to the first variable. Then the product $X\times Y_1\times...\times Y_n$ has $(I_\aleph)$ and all spaces $X,Y_1,...,Y_n$, except perhaps one have $(II_\aleph)$.
\end{theorem}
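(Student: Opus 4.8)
The plan is to prove the contrapositive. Negating the conclusion splits into two situations: either $P:=X\times Y_1\times\cdots\times Y_n$ fails $(I_\aleph)$, or at least two of the spaces $X,Y_1,\dots,Y_n$ fail $(II_\aleph)$. In each situation I will manufacture a separately continuous $f\colon P\to{\bf R}$ whose smallest concentration set $S_0\subseteq S$ with respect to the first variable, as described by Proposition \ref{p:3.1}, has cardinality $>\aleph$, so that $f$ does not depend on $\aleph$ coordinates with respect to the first variable, contradicting the hypothesis. A single device powers every case, call it \emph{spreading}: starting from a large family of elementary bumps coming from the failure of the property, I multiply the $i$-th bump by a nonconstant $\phi_{\sigma(i)}\colon X_{\sigma(i)}\to[0,1]$ depending on one previously unused coordinate, where $\sigma$ injects the (large) index set into $S$, and then sum. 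The inequality $|S|>\aleph$ is precisely what guarantees room for such an injection, and the standing assumption that each $X_s$ has at least two points is what makes each $\phi_{\sigma(i)}$ nonconstant.

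For the first assertion, suppose $P$ fails $(I_\aleph)$. By [3, Proposition 1] some finite subproduct $Q=\prod_{s\in T}X_s\times\prod_{j\in J}Y_j$ with $T\subseteq S$ finite already fails $(I_\aleph)$, hence carries a locally finite family of nonempty open sets of cardinality $>\aleph$; passing to a subfamily (local finiteness is inherited) I take it to be $(G_i)_{i\in I}$ with $|I|=\aleph^+\le|S|$. Using complete regularity of the finite product $Q$ I choose, inside a basic box $B_i\subseteq G_i$ over a finite coordinate set $F_i\subseteq T$, a continuous $g_i\colon Q\to[0,1]$ vanishing off $B_i$ with $g_i=1$ at a chosen point. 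Since $|S\setminus T|=|S|\ge|I|$ I fix an injection $\sigma\colon I\to S\setminus T$ and put $f=\sum_{i\in I}g_i\cdot\phi_{\sigma(i)}(x_{\sigma(i)})$, viewed on $P$. The supports form a locally finite family, so $f$ is even jointly continuous. Because $\sigma(i)\notin T\supseteq\bigcup_j F_j$ and $\sigma$ is injective, the coordinate $\sigma(i)$ occurs only in the $i$-th summand; evaluating at a point where $g_i=1$ and moving $x_{\sigma(i)}$ across two values separated by $\phi_{\sigma(i)}$ changes $f$, so $\sigma(i)\in S_0$ by Proposition \ref{p:3.1}. Thus $|S_0|\ge|I|>\aleph$.

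For the second assertion, suppose two factors fail $(II_\aleph)$. If one of them is $X$ and the other is some $Y_j$, then [3, Proposition 1] yields a finite $T_1\subseteq S$ together with a pointwise finite family of open boxes over $T_1$ in $X$ of cardinality $>\aleph$, and a pointwise finite family in $Y_j$; indexing both by a common $I$ with $|I|=\aleph^+\le|S|$, choosing bumps $g_i$ (on the $T_1$-coordinates) and $h_i$ (on $Y_j$), injecting $\sigma\colon I\to S\setminus T_1$, I set $f=\sum_i g_i\,h_i\,\phi_{\sigma(i)}(x_{\sigma(i)})$. If instead the two failing factors are $Y_i,Y_j$, I proceed identically with bumps on $Y_i,Y_j$ and an arbitrary injection $\sigma\colon I\to S$, the bumps now involving no $X$-coordinate. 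In either subcase pointwise finiteness makes each single-variable section a finite sum of continuous functions, so $f$ is separately continuous, and exactly as before each $\sigma(i)$ is an essential coordinate, giving $|S_0|\ge|I|>\aleph$.

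The existence of the bumps (complete regularity of the factors and of finite subproducts) and the cardinal bookkeeping are routine. The two points I expect to require care, and which I regard as the crux, are: (i) separate continuity of the infinite sums, where pointwise finiteness in the second assertion, and local finiteness in the first, is used to collapse each section to a finite sum; and (ii) the absence of cancellation at the spreading coordinates. Point (ii) is exactly what forces the early appeal to [3, Proposition 1]: localizing the family to finitely many coordinates $T$ (resp.\ $T_1$) leaves $|S|$-many coordinates free, so $\sigma$ can be chosen with range disjoint from them, ensuring that $x_{\sigma(i)}$ influences only the $i$-th term and is therefore genuinely essential.
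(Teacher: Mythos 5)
Your proof is correct, but it takes a genuinely different route from the paper's. The paper argues by contradiction and \emph{reduces to the two-variable case}: it regroups $X\times Y_1\times\cdots\times Y_{n-1}$ into a single topological product $Z=\prod_{t\in T}Z_t$ over the index set $T=S\cup\{t_1,\dots,t_{n-1}\}$, invokes Theorem~2 of [2] as a black box to get a separately continuous $g\colon Z\times Y_n\to{\bf R}$ whose minimal concentration set $T_0\subseteq T$ has cardinality $>\aleph$, and then uncurries $g$ to a separately continuous $f$ on the original $(n+1)$-fold product, observing via Proposition~\ref{p:3.1} that the minimal concentration set of $f$ with respect to the first variable is $T_0\cap S$, which still has cardinality $>\aleph$ because $T\setminus S$ is finite; the $(\mathrm{I}_\aleph)$ part is dispatched by ``similar arguments.'' You never touch [2, Theorem 2]: instead you rebuild the counterexample from scratch, using [3, Proposition 1] to localize the failing (locally finite, resp.\ pointwise finite) family to a finite subproduct, bump functions from complete regularity, and the ``spreading'' multiplication by nonconstant $\phi_{\sigma(i)}(x_{\sigma(i)})$ over fresh, injectively chosen coordinates $\sigma(i)\in S$ outside the finite localization set --- which is exactly what makes each $\sigma(i)$ land in the set $S_0$ of Proposition~\ref{p:3.1} with no cancellation, and what makes the sums separately (in the $(\mathrm{I}_\aleph)$ case even jointly) continuous. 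In effect you reprove the needed direction of [2, Theorem 2] in the $(n+1)$-variable setting. What the paper's route buys is brevity and the insight that the $n$-variable statement is formally no stronger than the known two-variable one; what your route buys is self-containedness (the key citation [2] is to a source not reproduced in the paper) and an explicit, uniform mechanism covering both conclusions, with the roles of local versus pointwise finiteness cleanly separated. Two cosmetic remarks: your finite coordinate sets $F_i$ should be allowed to meet the $Y$-indices of $Q$, not just $T$ (immaterial, since all that matters is $\sigma(i)\notin T$); and the existence of nonconstant $\phi_s$ on a two-point-containing factor presupposes the Tychonoff reading of ``completely regular,'' a convention the paper itself (and [2]) needs anyway.
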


\begin{proof} Suppose that there are two spaces among $X,Y_1,...,Y_n$ which have not (II$_\aleph$). Then there exists $k\leq n$ such that $Y_k$ has not  (II$_\aleph$). Without limiting the generality we assume that $k=n$. Choose some set $\tilde T=\{t_1,...,t_{n-1}\}$ with $|\tilde T|=n-1$ such that $S\cap \tilde T=\emptyset$. Put $T=S\cup\tilde T$, $Y=Y_n$, $Z_s=X_s$ for every $s\in S$, $Z_{t_k}=Y_k$ for $k=1,...,n-1$ and $Z=\prod\limits _{t\in T}Z_t$. Clearly that $Z$ and $Y$ have not (II$_\aleph$) and are completely regular. Then it follows from [2, Theorem
2] that there exists a separately continuous function $g:Z\times Y\to{\bf R} $ which depends not on $\aleph$ coordinates with respect to the first variable. According to Proposition \ref{p:3.1} the set
$$
  T_0=\{t\in T:(\exists y\in Y)(\exists u,v\in Z)(u|_{T\setminus \{t\}}=v|_{T\setminus \{t\}} \,\, {\rm and} \,\, g(u,y) \ne g(v,y))\}
$$
has the cardinality $>\aleph$. Consider the function $f:X\times Y_1\times...\times Y_n\to {\bf R} $ for which
$f(x,y_1,...,y_n)=g(z,y)$ where $z=(x,y_1,...,y_{n-1})$ and $y=y_n$. The separately continuity of $g$ implies the separately continuity of $f$. Moreover, it easy to see that the set $S_0=T_0\cap S$ is a smallest set among all sets on which $f$ concentrated with respect to the first variable. Since $|T_0|>\aleph$ and $\tilde T$ is finite, $|S_0|=|T_0\setminus \tilde T|>\aleph$. Thus, $f$ depends not on $\aleph$ coordinates with respect to the first variable, a contradiction.
Similar arguments show that the product  $X\times Y_1\times...\times Y_n$ has (I$_\aleph$).
\end{proof}

\section{Sufficient conditions of dependence}

\begin{theorem}\label{th:4.1} Let $\aleph$ be an infinite cardinal, a topological product $X=\prod\limits _{s\in S}X_s$ has (I$_\aleph$), spaces $Y_1,...,Y_n$
have (III$_\aleph$). Then every separately continuous function
\mbox{$f:X\times Y_1\times...\times Y_n\to {\bf R} $} depends on $\aleph$ coordinates with respec to the first coordinates.
\end{theorem}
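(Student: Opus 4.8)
The plan is to invoke Proposition~\ref{p:3.1}: the set $S_0$ described there is the smallest set on which $f$ is concentrated with respect to the first variable, so it suffices to prove $|S_0|\le\aleph$. I would argue by contradiction, assuming $|S_0|>\aleph$, and then manufacture a \emph{locally finite} family of nonempty open subsets of $X$ of cardinality $>\aleph$, in conflict with $(I_\aleph)$.

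First I would normalize the witnesses. For each $s\in S_0$ choose $u^{(s)},v^{(s)}\in X$ differing only in the $s$-th coordinate together with $y^{(s)}=(y_1^{(s)},\dots,y_n^{(s)})$ so that $f(u^{(s)},y^{(s)})\ne f(v^{(s)},y^{(s)})$. Since there are only countably many ordered pairs of rationals and two orientations of the inequality, a pigeonhole argument (using $\aleph_0\cdot\aleph=\aleph$) yields a subset $S_1\subseteq S_0$ with $|S_1|>\aleph$ and fixed rationals $q_1<q_2$ such that $f(u^{(s)},y^{(s)})<q_1<q_2<f(v^{(s)},y^{(s)})$ for every $s\in S_1$.

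Next I would eliminate the variables $Y_n,\dots,Y_1$ one at a time, using separate continuity together with $(III_\aleph)$. Suppose the last coordinates have already been frozen at points $y_{i+1}^{*},\dots,y_n^{*}$ over an index set of cardinality $>\aleph$, still preserving the two strict inequalities. Continuity in the $i$-th variable produces, for each remaining $s$, an open neighbourhood $N_s$ of $y_i^{(s)}$ in $Y_i$ on which both inequalities persist. If $(N_s)$ were $\aleph$-pointwise, then $(III_\aleph)$ would bound the index set by $\aleph$, a contradiction; hence some $y_i^{*}\in Y_i$ lies in more than $\aleph$ of the $N_s$, and restricting to those indices freezes the $i$-th coordinate at $y_i^{*}$ while keeping the index set of size $>\aleph$. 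After $n$ steps I obtain $S^{*}$ with $|S^{*}|>\aleph$ and a point $y^{*}=(y_1^{*},\dots,y_n^{*})$ satisfying $f(u^{(s)},y^{*})<q_1<q_2<f(v^{(s)},y^{*})$ for all $s\in S^{*}$.

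Finally I would pass to the continuous function $F\colon X\to{\bf R}$, $F(x)=f(x,y^{*})$. Let $\sigma_s\colon X\to X$ be the continuous map resetting the $s$-th coordinate to $v^{(s)}_s$, so $\sigma_s(u^{(s)})=v^{(s)}$, and set $G_s=F^{-1}((-\infty,q_1))\cap(F\circ\sigma_s)^{-1}((q_2,\infty))$. Each $G_s$ is open and contains $u^{(s)}$, hence nonempty. The crucial point, and the step I expect to be the main obstacle, is that $(G_s)_{s\in S^{*}}$ is not merely pointwise finite but genuinely locally finite, which is precisely what allows the weakest hypothesis $(I_\aleph)$ on $X$ to suffice. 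Here the gap $q_1<q_2$ is essential: given $x\in X$, continuity of $F$ furnishes a basic neighbourhood $B$ of $x$ with finite support $E$ on which the oscillation of $F$ is less than $q_2-q_1$; for $s\notin E$ and $y\in B$ one has $\sigma_s(y)\in B$, so $|F(y)-F(\sigma_s(y))|<q_2-q_1$ and the two defining inequalities of $G_s$ cannot hold together, whence $B\cap G_s=\emptyset$. Thus $B$ meets $G_s$ only for $s\in E$, so the family is locally finite. Then $(I_\aleph)$ forces $|S^{*}|\le\aleph$, contradicting $|S^{*}|>\aleph$; therefore $|S_0|\le\aleph$ and $f$ depends on $\aleph$ coordinates with respect to the first variable.
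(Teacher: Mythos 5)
Your proof is correct, and its first two thirds coincide with the paper's argument: the reduction via Proposition~\ref{p:3.1} to showing $|S_0|\le\aleph$, a countable pigeonhole to extract a uniform gap (the paper uses $|f(u_s,y_s)-f(v_s,y_s)|>1/k_0$ where you use a fixed rational pair $q_1<q_2$), and the step-by-step freezing of the variables $y_1,\dots,y_n$ using (III$_\aleph$) of each $Y_i$ to keep an index set of cardinality $>\aleph$. Where you genuinely diverge is the endgame. The paper, having produced the continuous section $f_b(x)=f(x,b)$ which depends essentially on more than $\aleph$ coordinates, simply cites the Noble--Ulmer theorem [3, Theorem 3.2]: a continuous function on a product with (I$_\aleph$) depends on $\aleph$ coordinates, contradiction. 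You instead prove the needed fact from scratch: the coordinate-resetting maps $\sigma_s$, the open sets $G_s=F^{-1}((-\infty,q_1))\cap(F\circ\sigma_s)^{-1}((q_2,\infty))$ containing $u^{(s)}$, and the oscillation argument on a basic neighbourhood with finite support showing that $(G_s)_{s\in S^*}$ is locally finite, which contradicts (I$_\aleph$) directly. This is essentially an inlined proof of the relevant case of Noble--Ulmer, and it is sound: the gap $q_2-q_1$ together with $\sigma_s(y)\in B$ for $s$ outside the support of $B$ does upgrade nonemptiness to genuine local finiteness. What the paper's route buys is brevity and reuse of a known theorem; what yours buys is a self-contained proof that makes visible exactly why the weakest hypothesis (I$_\aleph$) on $X$ suffices, namely that dependence witnesses with a fixed quantitative gap always assemble into a locally finite, not merely point-finite, family of open sets. (One cosmetic remark: your fixed orientation $f(u^{(s)},\cdot)<q_1<q_2<f(v^{(s)},\cdot)$ is convenient but not essential; the paper's symmetric gap $|f(u_s,\cdot)-f(v_s,\cdot)|>\varepsilon$ would serve equally well in your final step, using the open sets $\{x:|F(x)-F(\sigma_s(x))|>\varepsilon\}$.)
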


\begin{proof} Let  $Y=Y_1\times...\times Y_n$. We consider the set $S_0$ from Proposition \ref{p:3.1}. According to Proposition \ref{p:3.1}, it is sufficient to prove that $|S_0|\le \aleph$. Suppose the contrary, that is $|S_0|>\aleph$. For every $s\in S_0$ we choose $y_s=(y_{s,1},...,y_{s,n})\in Y$ and $u_s, v_s\in X$ such that $u_{s|_{S\setminus \{s\}}}=v_{s|_{S\setminus \{s\}}}$ and $f(u_s,y_s) \ne f(v_s,y_s)$. For every $k\in \bf N $ we put
$$
  S_k=\{s\in S_0:|f(u_s,y_s)-f(v_s,y_s)|>1/k\}.
$$
Clearly that $S_0=\bigcup \limits _{k=1}^\infty S_k$. Since $|S_0|>\aleph$, there exists an integer $k_0$ such that $|S_{k_0}|>\aleph$. Put $T_0=S_{k_0}$ and $\varepsilon =1/k_0$. It follows from the continuity of $f$ with respect to the second variable that for every $s\in T_0$ there exists an open neighborhood $V_{1,s}$ of $y_{1,s}$ in $Y_1$ such that
$$
  |f(u_s,y_1,y_{2,s},...,y_{n,s})-f(v_s,y_1,y_{2,s},...,y_{n,s})|>\varepsilon ,
$$
for $y_1\in V_{1,s}$. The family $(V_{1,s}:s\in T_0)$ is not $\aleph$-pointwise in $Y_1$, because $|T_0|>\aleph$ and $Y_1$ has (III$_\aleph$). Therefore there exists $b_1\in Y_1$ such that the set $T_1=\{s\in T_0:b_1\in V_{1,s}\}$ has the cardinality $>\aleph$. Note that
$$
  |f(u_s,b_1,y_{2,s},...,y_{n,s})-f(v_s,b_1,y_{2,s},...,y_{n,s})|>\varepsilon
$$
for every $s\in T_1$. It follows from the continuity of $f$ with respect to the third variable that for every $s\in T_1$ there exists an open neighborhood $V_{2,s}$ of $y_{2,s}$ in $Y_2$ such that
$$
  |f(u_s,b_1,y_{2},y_{3,s}...,y_{n,s})-f(v_s,b_1,y_2,y_{3,s},...,y_{n,s})|>\varepsilon
$$
for $y_2\in V_{2,s}$. The family $(V_{2,s}:s\in T_1)$ is not $\aleph$-pointwise in $Y_2$, because $|T_1|>\aleph$ and $Y_2$ has (III$_\aleph$). Therefore there exists $b_2\in Y_2$ such that the set $T_2=\{s\in T_1:b_2\in V_{2,s}\}$ has the cardinality $>\aleph$. Then
$$
  |f(u_s,b_1,b_{2},y_{3,s}...,y_{n,s})-f(v_s,b_1,b_2,y_{3,s},...,y_{n,s})|>\varepsilon
$$
for every $s\in T_2$.

Making $n-2$ such steps we obtain a set $T_n\subseteq S$ and a point $b=(b_1,...,b_n)\in Y$ such that $|T_n|>\aleph$ and $|f(u_s,b)-f(v_s,b)|>\varepsilon $ for every $s\in T_n$. This implies that the function $f_b:X\to{\bf R}$, $f_b(x)=f(x,b)$, essentially depends on every coordinate $x_s$ with $s\in T_n$. But the function $f_b$ is continuous and $X$ has (I$_\aleph$). Therefore according to Noble-Ulmer Theorem [3, Theorem 3.2] $f_b$ depends on $\aleph$ coordinates, a contradiction.
\end{proof}

\begin{corollary}\label{cor:4.2}
Let all topological products $X_i=\prod\limits _{s\in S_i} X_{i,s}$ for $i=1,...,n$ have (III$_\aleph$). Then every separately continuous function
$f:X_1\times\cdots\times X_n\to{\bf R} $ depends on $\aleph$ coordinates.
\end{corollary}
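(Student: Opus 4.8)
The plan is to reduce the corollary to a repeated application of Theorem \ref{th:4.1}, one application for each of the $n$ variables. The key fact to invoke at the end is the remark closing Section 2: since $\aleph$ is infinite, the function $f:X_1\times\cdots\times X_n\to{\bf R}$ depends on $\aleph$ coordinates if and only if it depends on $\aleph$ coordinates with respect to the $i$-th variable for every $i=1,\dots,n$. Thus it suffices to establish, for each fixed $i$, that $f$ depends on $\aleph$ coordinates with respect to the $i$-th variable.

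So I would fix an index $i\in\{1,\dots,n\}$ and regroup the product, treating $X_i=\prod_{s\in S_i}X_{i,s}$ as the distinguished first factor and the remaining spaces $X_1,\dots,X_{i-1},X_{i+1},\dots,X_n$ as the factors $Y_1,\dots,Y_{n-1}$ in the notation of Theorem \ref{th:4.1}. The hypothesis that every $X_j$ has (III$_\aleph$) supplies exactly what Theorem \ref{th:4.1} needs: the distinguished factor $X_i$ has (I$_\aleph$), since (III$_\aleph$)$\Longrightarrow$(I$_\aleph$), while each of the remaining factors retains (III$_\aleph$). One must also check that separate continuity is preserved under this regrouping; but continuity of $f$ in each original variable (all others held fixed) is precisely separate continuity of the regrouped function, so no genuine work is required here.

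Applying Theorem \ref{th:4.1} in this arrangement yields that $f$ depends on $\aleph$ coordinates with respect to its first factor in the regrouped sense, that is, with respect to the $i$-th variable in the original product. Since $i$ was arbitrary, this holds for all $i=1,\dots,n$, and the remark from Section 2 then assembles these into the conclusion that $f$ depends on $\aleph$ coordinates.

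I do not expect a serious obstacle: the content is carried entirely by Theorem \ref{th:4.1}, and the corollary is a matter of correct bookkeeping. The only point demanding a little care is the passage between ``depends with respect to each variable'' and ``depends on $\aleph$ coordinates'', which relies essentially on $\aleph$ being infinite, so that a finite union of index sets each of cardinality $\le\aleph$ again has cardinality $\le\aleph$; this is exactly the equivalence recorded at the end of Section 2.
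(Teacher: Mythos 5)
Your proof is correct and is exactly the argument the paper intends: the paper states Corollary \ref{cor:4.2} without an explicit proof, as an immediate consequence of Theorem \ref{th:4.1}, obtained precisely by your regrouping (the $i$-th factor made the distinguished one, using (III$_\aleph$)$\Longrightarrow$(I$_\aleph$) for it and (III$_\aleph$) for the rest) combined with the Section~2 remark that, for infinite $\aleph$, dependence on $\aleph$ coordinates with respect to every variable gives dependence on $\aleph$ coordinates. There is no gap in your bookkeeping.
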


If all spaces $X_{i,s}$ are metrizable we obtain necessary and sufficient conditions for dependence on $\aleph$ coordinates of separately continuous function.

\begin{theorem}\label{th:4.3}
Let $\aleph$ be an infinite cardinal, $X_i=\prod\limits _{s\in S_i} X_{i,s}$ for $i=1,...,n$ be topological products of families of metrizable spaces $X_{i,s}$,  $|S_1|>\aleph$ and $X=X_1\times\cdots\times X_n$. Then the following conditions are equivalent:

(i) every separately continuous function $f:X_1\times\cdots\times X_n\to{\bf R}$ depends on $\aleph$ coordinates;

(ii) every separately continuous functions $f:X_1\times\cdots\times X_n\to{\bf R} $ depends on $\aleph$ coordinates with respect the first variables;

(iii) every continuous function $f:X\to \bf R $ depends on $\aleph$ coordinates;

(iv) $X$ has (I$_\aleph$);

(v) every space $X_{i,s}$ has (I$_\aleph$);

(vi) $X$ has (II$_\aleph$);

(vii) every space $X_{i,s}$ has (II$_\aleph$);

(viii) $X$ has (III$_\aleph$);

(ix) every space $X_{i,s}$ has (III$_\aleph$);

(x) $d(X_{i,s})\le\aleph$  for every $i$  and $s$.
\end{theorem}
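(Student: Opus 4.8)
The plan is to split the ten conditions into the structural block (iv)--(x), about the spaces, and the functional block (i)--(iii), about maps, to prove each block internally equivalent, and then to bridge them using the two main theorems already established. For the structural block, metrizability of each $X_{i,s}$ lets Proposition~\ref{p:2.3}(ii) identify (I$_\aleph$), (II$_\aleph$), (III$_\aleph$) and $d(X_{i,s})\le\aleph$ for each single factor, so (v)$\Leftrightarrow$(vii)$\Leftrightarrow$(ix)$\Leftrightarrow$(x). Corollary~\ref{cor:2.2} gives (viii)$\Leftrightarrow$(ix), the trivial implications (III$_\aleph$)$\Rightarrow$(II$_\aleph$)$\Rightarrow$(I$_\aleph$) applied to $X$ give (viii)$\Rightarrow$(vi)$\Rightarrow$(iv), and the cited [3, Proposition 1] supplies the return (iv)$\Rightarrow$(v), since if $X$ has (I$_\aleph$) then so does every finite subproduct and in particular every single factor $X_{i,s}$. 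This closes the cycle and makes (iv)--(x) mutually equivalent.

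Next I would attach the functional conditions. The remark in Section 2 that dependence on $\aleph$ coordinates is the same as dependence on $\aleph$ coordinates with respect to each variable gives (i)$\Rightarrow$(ii) immediately. For (ii)$\Rightarrow$(iv) I would apply Theorem~\ref{th:3.2} with the distinguished first product taken to be $X_1$ (legitimate since $|S_1|>\aleph$) and the completely regular multipliers $X_2,\dots,X_n$ in the roles of $Y_1,\dots,Y_{n-1}$; its conclusion is precisely that $X=X_1\times\cdots\times X_n$ has (I$_\aleph$). To re-enter (i) from the structural block I would start from (ix): by Corollary~\ref{cor:2.2} each $X_i$ then has (III$_\aleph$), so Corollary~\ref{cor:4.2} yields that every separately continuous $f$ depends on $\aleph$ coordinates. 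Finally (i)$\Rightarrow$(iii) is free, a continuous function being separately continuous, so the whole equivalent block (i),(ii),(iv)--(x) already implies (iii).

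The one serious link is the converse (iii)$\Rightarrow$(iv), which I would prove by contraposition and which I expect to be the main obstacle. If $X$ lacks (I$_\aleph$), then by the equivalence (iv)$\Leftrightarrow$(x) just obtained some factor $M=X_{i_0,s_0}$ has $d(M)>\aleph$; from a $\sigma$-discrete base of the metric space $M$ (the device behind Proposition~\ref{p:2.3}(ii)) I would extract a discrete family $(W_\alpha:\alpha<\lambda)$ of nonempty open sets with $\lambda=\aleph^{+}$, choose $w_\alpha\in W_\alpha$ and Urysohn functions $h_\alpha\colon M\to[0,1]$ with $h_\alpha(w_\alpha)=1$ and support inside $W_\alpha$. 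Since $|S|>\aleph$ forces $|S\setminus\{s_0\}|\ge\lambda$, I would fix distinct coordinates $t_\alpha\in S\setminus\{s_0\}$, with two points $a_\alpha\ne b_\alpha$ in the $t_\alpha$-th factor and a continuous map $g_\alpha$ into $[0,1]$ satisfying $g_\alpha(a_\alpha)=0\ne1=g_\alpha(b_\alpha)$. Then $f(x)=\sum_{\alpha<\lambda}h_\alpha(x_{s_0})\,g_\alpha(x_{t_\alpha})$ is continuous, because discreteness of $(W_\alpha)$ reduces the sum to a single summand on a neighborhood of each point, while $f$ essentially depends on every $t_\alpha$ (evaluate at $x$ with $x_{s_0}=w_\beta$, $x_{t_\beta}=a_\beta$ and at its $t_\beta$-shift to $b_\beta$), hence on $\lambda>\aleph$ coordinates, contradicting (iii).

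The delicate points in this last step, and the reason I flag it as the crux, are that continuity of $f$ requires a genuinely \emph{discrete} (not merely disjoint) open family so the defining sum is locally finite, and that honest dependence on more than $\aleph$ coordinates requires the auxiliary coordinates $t_\alpha$ to be distinct and their factors nondegenerate; both are guaranteed here by the metric $\sigma$-discrete base together with $|S_1|>\aleph$ and the standing hypothesis that every factor has at least two points. All other implications are routine bookkeeping with Proposition~\ref{p:2.3}, Corollaries~\ref{cor:2.2} and \ref{cor:4.2}, Theorem~\ref{th:3.2}, and [3, Proposition 1]. Should the Noble--Ulmer theorem [3, Theorem 3.2] cited for Theorem~\ref{th:4.1} in fact be an equivalence, this construction could be replaced by a direct appeal to its converse half for (iii)$\Rightarrow$(iv).
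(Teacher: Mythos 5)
Your proposal is correct, and most of it walks the same path as the paper: (i)$\Rightarrow$(ii) trivially, (ii)$\Rightarrow$(iv) from Theorem~\ref{th:3.2} applied with $X_1$ as the distinguished product, the structural equivalences among (iv)--(x) from Proposition~\ref{p:2.3}, Corollary~\ref{cor:2.2} and [3, Proposition 1], and the re-entry into (i) from Corollaries~\ref{cor:2.2} and~\ref{cor:4.2}. (One minor cosmetic difference inside the structural block: you reduce (iv) to single factors via [3, Proposition 1], whereas the paper reduces to finite subproducts and uses that a finite product of metrizable spaces is metrizable, then applies Proposition~\ref{p:2.3} to those subproducts; both are sound.) The genuine divergence is the link (iii)$\Leftrightarrow$(iv). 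The paper dispatches it in one line: it cites the Noble--Ulmer theorem [3, Theorem 3.2] precisely as a two-sided equivalence between (iii) and (iv), which is exactly the fallback you flag in your final sentence. You instead prove the hard direction (iii)$\Rightarrow$(iv) by hand: from $\neg$(iv), hence $\neg$(x), you take a factor $M=X_{i_0,s_0}$ with $d(M)>\aleph$, extract from a $\sigma$-discrete base a discrete family $(W_\alpha:\alpha<\aleph^{+})$ of nonempty open sets, and form $f(x)=\sum_{\alpha<\aleph^{+}}h_\alpha(x_{s_0})\,g_\alpha(x_{t_\alpha})$, continuous because discreteness makes the sum locally a single term, and essentially dependent on the $\aleph^{+}$ distinct coordinates $t_\alpha$ (available since $|S_1|>\aleph$ and, by the paper's standing assumption, every factor has at least two points). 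This construction checks out: members of a discrete family are pairwise disjoint, so the supports of the $h_\alpha$ are disjoint; metrizability supplies both the $\sigma$-discrete base and the Urysohn functions $h_\alpha$, $g_\alpha$; and the witness shows $f$ is concentrated on no set of cardinality $\le\aleph$. In effect you re-prove the necessity half of the Noble--Ulmer theorem in the metrizable-factor setting. What the paper's route buys is brevity; what yours buys is self-containedness --- the theorem then rests only on the sufficiency half of the cited result, which enters anyway through Theorem~\ref{th:4.1} and Corollary~\ref{cor:4.2}.
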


\begin{proof}
The implication (i)$\Longrightarrow $(ii) is obvios. The implication (ii)$\Longrightarrow $(iv) follows from Theorem \ref{th:3.2}. According to Noble-Ulmar Theorem [3, Theorem 3.2] the conditions (iii) and (iv) are equivalent. The equivalence of (viii) and (ix) follows from Corollary \ref{cor:2.2}. The equivalence of (v), (vii), (ix) and (x) follows from Proposition \ref{p:2.3}. Note that the product $X$ has one of the properties (I$_\aleph$), (II$_\aleph$) or (III$_\aleph$) if and only if every their finite subproduct has the same property. Moreover, the product of finite number of metrizable space is metrizable. Therefore according to Proposition \ref{p:2.3}, the properties (iv), (vi) and (viii) are equivalent. Thus the properties (iv)--(x) are equivalent. Finally, the implication (viii)$\Longrightarrow $(i) follows from Corollaries \ref{cor:2.2} and \ref{cor:4.2}.
\end{proof}

\section{Main result}

A subset $E$ of the product $X_1\times \cdots \times
X_n$ of topological spaces $X_1, \cdots ,X_n$ is called º {\it projectively meagre}, if for every $i=1,\cdots,n$ the projection of $E$ in parallel to $i$-th multiplier is meagre in  $X_1 \times \cdots \times X_{i-1} \times X_{i+1} \times \cdots \times X_n$.

\begin{theorem}
Let $X_i=\prod\limits _{s\in S_i} X_{i,s}$ for $i=1,...,n$ be topological products of families of metrizable separable spaces $X_{i,s}$,  $X=X_1\times\cdots\times X_n$
and $E\subseteq  X$. Then $E$ is the discontinuiuty points set for a separately continuous function $f:X_1\times\cdots\times X_n\to{\bf R}$ if and only if there exist at most countable sets $T_i\subseteq S_i$ for $i=1,..,n$ and projectively meagre $F_{\sigma}$-set $E_0$ in a product $Y=Y_1\times\cdots\times Y_n$, where $Y_i=\prod\limits _{s\in T_i} X_{i,s}$ such that $E={\rm pr} ^{-1}(E_0)$ where ${\rm pr} :X\to Y $ is the natural projection for which  ${\rm pr} (x_1,...,x_n)=(x_{1|_{T_1}},...,x_{n|_{T_n}})$.
 \end{theorem}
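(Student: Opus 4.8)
The plan is to prove both directions of the equivalence, reducing the problem to a countable product via the dependence theorem \ref{th:4.3} and then invoking the classical characterization of discontinuity-point sets of separately continuous functions on products of separable metrizable spaces.

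First I would handle the \emph{necessity} direction. Given a separately continuous $f:X\to\mathbf R$ with discontinuity set $E$, each factor $X_{i,s}$ is separable metrizable, hence by Proposition \ref{p:2.3}(i) has property (III$_{\aleph_0}$); so by Theorem \ref{th:4.3} (applied with $\aleph=\aleph_0$) the function $f$ depends on at most countably many coordinates. Concretely, for each $i$ there is a countable $T_i\subseteq S_i$ such that $f$ is concentrated on $T_i$ with respect to the $i$-th variable. This yields a factorization $f=g\circ\mathrm{pr}$ through the countable subproduct $Y=\prod_i Y_i$ with $Y_i=\prod_{s\in T_i}X_{i,s}$, where $g:Y\to\mathbf R$ is separately continuous. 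Since $\mathrm{pr}$ is a continuous open surjection and the complementary factors are nonempty, a point $x\in X$ is a continuity point of $f$ exactly when $\mathrm{pr}(x)$ is a continuity point of $g$; thus $E=\mathrm{pr}^{-1}(E_0)$ where $E_0=D(g)$ is the discontinuity set of $g$. Finally, because each $Y_i$ is a countable product of separable metrizable spaces, it is itself separable metrizable; so by the classical theorem on discontinuity sets of separately continuous functions (the Kershner/Kuratowski-type result for metrizable factors), $E_0$ is a projectively meagre $F_\sigma$-set in $Y$.

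For the \emph{sufficiency} direction I would start from a given projectively meagre $F_\sigma$-set $E_0\subseteq Y$. Again each $Y_i$ is separable metrizable, so by the converse half of the same classical theorem there exists a separately continuous $g:Y\to\mathbf R$ whose discontinuity set is exactly $E_0$. I then define $f=g\circ\mathrm{pr}:X\to\mathbf R$. Separate continuity of $f$ follows from that of $g$ together with continuity of the coordinate projections, and the same open-surjection argument as above gives $D(f)=\mathrm{pr}^{-1}(D(g))=\mathrm{pr}^{-1}(E_0)=E$, completing the proof.

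The main obstacle I expect is the careful verification that $D(f)=\mathrm{pr}^{-1}(D(g))$, i.e.\ that continuity of $f$ at $x$ is equivalent to continuity of $g$ at $\mathrm{pr}(x)$. One direction is immediate since $\mathrm{pr}$ is continuous; the reverse requires using that $\mathrm{pr}$ restricts to a homeomorphism on each ``slice'' obtained by fixing the complementary coordinates, and that the complementary factors contribute nothing because $f$ genuinely factors through $\mathrm{pr}$. The other delicate point is the precise statement and applicability of the underlying classical characterization of discontinuity sets for separately continuous functions on products of separable metrizable spaces, which must cover the $n$-variable case (projective meagreness in all $n$ directions) rather than just two variables; the reduction to a \emph{countable} product is exactly what makes that classical machinery available here.
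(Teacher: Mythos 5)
Your proposal is correct and follows essentially the same route as the paper: reduce via the dependence-on-countably-many-coordinates result to a factorization $f=f_0\circ\mathrm{pr}$ through the countable subproduct $Y$, use that $\mathrm{pr}$ is continuous and open to get $D(f)=\mathrm{pr}^{-1}(D(f_0))$, and then apply (in both directions) the $n$-variable characterization of discontinuity sets on products of separable metrizable spaces from reference [5, Theorem 2]. The only cosmetic difference is that the cleanest citation for the dependence step is Corollary \ref{cor:4.2} (via Proposition \ref{p:2.3}) rather than Theorem \ref{th:4.3}, whose hypothesis $|S_1|>\aleph$ need not hold here; this changes nothing in substance.
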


\begin{proof} Let $f:X\to \bf R$ be a separately continuous function and the set $D(f)$ of the discontinuity points set of $f$ coincides with $E$. Since the spaces $X_{i,s}$ are metrizable and $d(X_{i,s})\le\aleph_0$, according to Theorem \ref{th:4.1} the function $f$ depends on $\aleph_0$ coordinates. Then for every $i=1,...,n$ there exists an at most countable set $S_i\subseteq T_i$ such that $f=f_0\circ{\rm pr}$ where $f_0:Y\to{\bf R} $ be a function, ${\rm pr} $ and $Y$ are as in the formulation of the Theorem. The mapping ${\rm pr} :X\to Y $ is continuous and open, therefore $f_0$ is separately continuous and $D(f)={\rm pr} ^{-1}(D(f_0))$. Let $D(f_0)=E_0$. Since the spaces  $Y_1,...,Y_n$ are metrizable and separable, the set $E_0$ is a projectively meagre $F_{\sigma}$-set in $Y$ (see [5, Theorem 2]). Now we have $E={\rm pr} ^{-1}(E_0)$. Thus, the necessity is proved.

For the proof of the sufficiency we note that Theorem 2 from [5] implies that for projectively meagre $F_{\sigma}$-set $E_0$ in $Y$ there exists separately continuous function $f_0:Y\to{\bf R} $ such that $D(f_0)=E_0$. Then the function $f=f_0\circ{\rm pr} :X\to{\bf R} $ is separately continuous and
 $D(f)={\rm pr} ^{-1}(E_0)=E$.
 \end{proof}

\bibliographystyle{amsplain}

\end{document}